\begin{document}
\nocite{1,2,3,4,5,6,7,8}

\newtheorem{thm}{Theorem}
\newtheorem{cor}{Corollary}
\newtheorem{lemma}{Lemma}

\theoremstyle{remark}
\newtheorem{rmk}{Remark}

\theoremstyle{definition}
\newtheorem{defn}{Definition}
\newtheorem{prop}{Proposition}

\newcommand{\abs}[1]{\left\lvert#1\right\rvert}
\newcommand{\norm}[1]{\left\lVert#1\right\rVert}
\newcommand{\var}[1]{\underset{#1}{\mathrm{var}}}

\title{\large\textbf{A CENTRAL LIMIT THEOREM FOR\\ROSEN CONTINUED FRACTIONS}}
\author{Juno Kim, Kyuhyeon Choi}

\maketitle

\begin{abstract}
We prove a central limit theorem for Birkhoff sums of the Rosen continued fraction algorithm. A Lasota-Yorke bound is obtained for general one-dimensional continued fractions with the bounded variation space, which implies quasi-compactness of the transfer operator. The main result is a direct proof of the existence of a spectral gap, assuming a certain behavior of the transformation when iterated. This condition is explicitly proved for the Rosen system. We conclude via well-known results of A. Broise that the central limit theorem holds.
\end{abstract}

\section{Introduction}

Our main systems of interest are the \textit{Rosen continued fractions}, a generalization of the Euclidean nearest-integer algorithm. For a fixed integer $q\geq 3$, let $\lambda_q=2\cos(\frac{\pi}{q})$ and $I_q=[-\lambda_q/2,\lambda_q/2]$. The Rosen map $T_q:I_q\rightarrow I_q$ is given as:

\begin{equation*}
T_q(x)=\abs{\frac{1}{x}}-\lambda_q \left\lfloor{\abs{\frac{1}{\lambda_q x}}+\frac{1}{2}}\right\rfloor,\;\;\;T(0)=0\,.
\end{equation*}

This corresponds to a continued fraction expansion of the form
\begin{equation}\label{expansion}
x=\frac{\epsilon_1}{a_1\lambda_q+\frac{\epsilon_2}{a_2\lambda_q+\cdots}}\,,
\end{equation}
where $\epsilon_n=\mathrm{sgn}(T^{n-1}(x))$ and $a_n=\left\lfloor{\abs{ 1/\lambda_q T^{n-1}x}+1/2} \right\rfloor$. When $q=3$, we retrieve the nearest-integer algorithm.

Our goal is to show that Birkhoff sums of the Rosen continued fractions are asymptotically Gaussian. In [2], Broise showed such a central limit system for general piecewise expanding maps of the interval under some conditions, including the assumption that 1 is a simple eigenvalue of the corresponding Perron-Frobenius operator. Our paper gives a proof of the existence of a spectral gap for Rosen continued fractions.

We develop our theory for a more general class of 1-dimensional continued fractions, detailed in Section 2. In Section 3, we show a Lasota-Yorke type bound, which is used in Section 4 to show quasi-compactness of the transfer operator. Next, in Section 5, this result is strengthened to prove the existence of a spectral gap under a certain technical hypothesis concerning the transformation. This hypothesis will be shown to hold for the Rosen continued fractions in Section 6. Finally, we conclude with the statement of the Central Limit Theorem in Section 7.

\section{Iwasawa Continued Fractions}

The results of Sections 3 and 4 hold in general for a wide class of expanding maps of intervals. One suitably broad framework in which we may work in is the theory of \textit{Iwasawa continued fractions}, developed in detail in [3]. This includes the Euclidean algorithm, the even, odd and centered algorithms, continued fractions, alpha continued fractions, Rosen continued fractions, the alpha-Rosen extensions, and their folded variants.

In this framework, $T$ is a map from the interval $I=[a,b]$ to itself, which admits a countable partition into subintervals such that $T$ is of the form $\iota(x)+$(constant) on each subinterval. Here, $\iota$ denotes one of the inversions $\pm 1/x$, $\pm\abs{1/x}$, so that $\abs{T'(x)}=1/x^2$ for all $x\in I$. We denote by $\mathcal{H}$ the set of all inverse branches of $T$, and require that every branch has domain the full interval $I$, except possibly the leftmost and rightmost branches, whose domain should still share an endpoint with $I$.

The continued fraction is called \textit{proper} if $\gamma\coloneqq \max\{\abs{a},\abs{b}\}<1$; we will require this assumption throughout this paper. The only significant algorithm which is not proper is the standard Euclidean algorithm, whose normality have already been studied (see [8]).

\begin{lemma}
For a proper one-dimensional Iwasawa continued fraction, the following bounds hold for all $n$:
\begin{equation*}
\sup_I \frac{1}{\abs{(T^n)'}}\leq \gamma^{2n}\;\;\;\mathrm{and}\;\;\;\sup_I \abs{\left(\frac{1}{\abs{(T^n)'}}\right)'} \leq 2\gamma \frac{1-\gamma^{2n}}{1-\gamma^2}
\end{equation*}
\end{lemma}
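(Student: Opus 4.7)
The plan is to exploit the two defining features of the Iwasawa setup: that $\abs{T'(x)} = 1/x^2$ on each branch, and that $\abs{x} \leq \gamma < 1$ throughout $I$. First I will apply the chain rule to rewrite
\[
\frac{1}{\abs{(T^n)'(x)}} = \prod_{k=0}^{n-1} \frac{1}{\abs{T'(T^k x)}} = \prod_{k=0}^{n-1} (T^k x)^2
\]
at any $x$ where $T^n$ is smooth, i.e.\ on the interior of a length-$n$ cylinder. Since $\abs{T^k x} \leq \gamma$ for every iterate, each factor is at most $\gamma^2$, and the first inequality is immediate.

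For the derivative bound, let $\phi_n(x) := 1/\abs{(T^n)'(x)}$, which is smooth and positive on each such cylinder. I would then take the logarithmic derivative of the product representation to get
\[
\frac{\phi_n'(x)}{\phi_n(x)} = 2\sum_{k=0}^{n-1} \frac{(T^k)'(x)}{T^k(x)},
\]
and use $\abs{(T^k)'(x)}^{-1} = \prod_{i<k}(T^i x)^2$ to simplify each summand after clearing $\phi_n$. This is where the algebraic cancellation happens cleanly: the $k$-th contribution to $\abs{\phi_n'(x)}$ telescopes to
\[
\phi_n(x)\cdot\frac{\abs{(T^k)'(x)}}{\abs{T^k x}} \;=\; \abs{T^k x}\prod_{j=k+1}^{n-1}(T^j x)^2 \;\leq\; \gamma\cdot\gamma^{2(n-1-k)}.
\]
Reindexing by $\ell = n-1-k$ and summing the resulting geometric series then yields exactly $2\gamma(1-\gamma^{2n})/(1-\gamma^2)$.

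The calculation is essentially a routine chain-rule exercise, so I do not anticipate a genuine obstacle; the only subtlety worth flagging is that $T$ has countably many discontinuities, so $T^n$ is differentiable only on the interior of each length-$n$ cylinder. Both suprema should be interpreted in this pointwise sense, but since the excluded set is countable the stated bounds hold wherever the derivatives exist, which is all that is needed for the downstream Lasota--Yorke estimate.
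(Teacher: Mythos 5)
Your proposal is correct and follows essentially the same route as the paper: both write $1/\abs{(T^n)'(x)}=\prod_{i=0}^{n-1}T^i(x)^2$, bound each factor by $\gamma^2$ for the first inequality, and then differentiate the product, using the cancellation $\abs{(T^k)'(x)}\prod_{j<k}(T^j x)^2=1$ to reduce the $k$-th term to $\abs{T^k x}\prod_{j>k}(T^j x)^2\leq\gamma^{2(n-k)-1}$ before summing the geometric series. Your logarithmic-derivative bookkeeping is just a repackaging of the paper's direct product rule, and your remark about differentiability only on cylinder interiors is a fair (if implicit in the paper) caveat.
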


\begin{proof}
For the first inequality,
\begin{equation*}
\frac{1}{\abs{(T^n)'(x)}}=\frac{1}{\prod_{i=0}^{n-1} \abs{T'(T^i(x))}}= \prod_{i=0}^{n-1}T^i(x)^2 \leq \gamma^{2n}\,.
\end{equation*}
For the second inequality,
\begin{align*}
\abs{\left(\frac{1}{\abs{(T^n)'}}\right)'(x)} &= \abs{ \left(\prod_{i=0}^{n-1}T^i(x)^2 \right)'} \leq 2\sum_{i=0}^{n-1} \abs{T^i(x)} \prod_{j=0}^{i-1} T'(T^j(x)) \prod_{j=0,\;j\neq i}^{n-1} T^j(x)^2\\
&=2\sum_{i=0}^{n-1} \abs{T^i(x)} \prod_{j=i+1}^{n-1} T^j(x)^2 \leq 2\sum_{i=0}^{n-1} \gamma^{2n-2i-1}=2\gamma \frac{1-\gamma^{2n}}{1-\gamma^2}\,,
\end{align*}
where an empty product is defined to be 1.
\end{proof}

The transfer operator $H$ on $L^1(I)$ is given, by an application of the change-of-variables formula, as follows:

\begin{equation*}
Hf(x)=\sum_{h\in \mathcal{H}} |h'(x)|f\circ h(x)\cdot \chi_{TI_h}(x)
\end{equation*}

Note that $\norm{Hf}_1\leq\int_I\sum_{h\in\mathcal{H}} \abs{h'(x)}\abs{f\circ h(x)} dx=\int_I H\abs{f}(x)dx=\int_I \abs{f(x)}dx$, hence $\norm{H}_1\leq 1$. In fact, there exists an invariant probability measure $f_1m$ for $T$ by [1], where $m$ denotes Lebesgue measure on $I$. Since $f_1$ must satisfy $Hf_1=f_1$, we have $\norm{H}_1=1$.

Letting $\mathcal{H}^n$ denote the set of inverse branches of $T^n$, we more generally have:

\begin{equation*}
H^nf(x)=\sum_{h\in \mathcal{H}^n} |h'(x)|f\circ h(x)\cdot \chi_{T^nI_h}(x)\,.
\end{equation*}

For each $n\in\mathbb{N}$, $\{I_h:h\in\mathcal{H}^n\}$ is a countable partition of $I$ (up to overlapping endpoints). Some simple bookkeeping of subintervals will yield the following.

\begin{prop}
The set $\{T^nI_h:h\in\mathcal{H}^n\}$ is finite, and consists only of the full interval $I$ or intervals sharing an endpoint with $I$.
\end{prop}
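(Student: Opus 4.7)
The plan is to induct on $n$, verifying simultaneously that the image set is finite and that every image either equals $I$ or shares an endpoint with $I$. The base case $n=1$ is immediate from the setup: $TI_h=\mathrm{dom}(h)$ for $h\in\mathcal{H}$, which by hypothesis is $I$ itself or one of the (at most two) subintervals $D_L:=\mathrm{dom}(h_L)$, $D_R:=\mathrm{dom}(h_R)$ arising from the edge branches, each sharing an endpoint with $I$.

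For the inductive step, given $h\in\mathcal{H}^{n+1}$ I would decompose $h=h_1\circ h'$ with $h_1\in\mathcal{H}$ and $h'\in\mathcal{H}^n$, so that $I_h=I_{h_1}\cap T^{-1}I_{h'}$. Since $T$ restricts to a homeomorphism $I_{h_1}\to\mathrm{dom}(h_1)$,
\[
T(I_h)=\mathrm{dom}(h_1)\cap I_{h'},\qquad T^{n+1}(I_h)=T^n\bigl(\mathrm{dom}(h_1)\cap I_{h'}\bigr).
\]
If $h_1$ is a non-edge branch then $\mathrm{dom}(h_1)=I$ and $T^{n+1}(I_h)=T^n(I_{h'})$, which lies in the finite list by induction. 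The remaining case $h_1\in\{h_L,h_R\}$ is handled by the claim that $D_L$ and $D_R$ are, up to endpoints, unions of first-level partition cells $\{I_g:g\in\mathcal{H}\}$. Granting this, write $I_{h'}\subseteq I_{h_2}$ for the first branch $h_2$ in the itinerary of $h'$; then $\mathrm{dom}(h_1)\cap I_{h_2}$ is either empty or all of $I_{h_2}$, so $\mathrm{dom}(h_1)\cap I_{h'}\in\{\emptyset,I_{h'}\}$ and $T^{n+1}(I_h)\in\{\emptyset,T^n(I_{h'})\}$. The induction then closes with the list of shapes contained in $\{I,D_L,D_R\}$.

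The main obstacle is justifying the structural claim that $D_L$ and $D_R$ decompose into $\mathcal{P}_1$-cells. The non-$\{a,b\}$ endpoint of each equals $T(a)$ or $T(b)$ computed along the corresponding edge branch, and one needs to check that this value falls on a partition boundary. In the Iwasawa framework this should follow from the prescribed inversive form $T=\iota(x)+\mathrm{const}$ on each branch combined with the behaviour of $\iota$ at the endpoints of $I$; for the Rosen systems specifically it reduces to an explicit computation with $T_q(\pm\lambda_q/2)$ that can be dispatched once and for all.
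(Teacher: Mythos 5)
Your reduction in the inductive step is set up correctly ($T^{n+1}(I_h)=T^n(\mathrm{dom}(h_1)\cap I_{h'})$, with the non-edge case closing immediately), but the argument hinges entirely on the structural claim you flag at the end — that the edge-branch domains $D_L$, $D_R$ are unions of first-level cells — and that claim is \emph{false} for the Rosen maps. Take $q=5$, so $\lambda_5=2\cos(\pi/5)\approx 1.618$ and $\sigma=\lambda_5/2\approx 0.809$. The rightmost cylinder is $I_1=[1/(3\sigma),\sigma]\approx[0.412,0.809]$, on which $T(x)=1/x-\lambda_5$, so its image (the domain of the corresponding edge branch) is $[2/\lambda_5-\lambda_5,\,\sigma]\approx[-0.382,\,0.809]$. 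The left endpoint $-0.382$ lies strictly inside the cell $I_4=[-1/(3\sigma),-1/(5\sigma)]\approx[-0.412,-0.247]$, not on a cell boundary. (The same failure occurs for every $q\geq 5$; only $q=3,4$ give $T_q(\lambda_q/2)=0$, a boundary point.) This is the well-known non-Markov character of the Rosen systems: the orbit of $\lambda_q/2$ does not respect the partition. Consequently $\mathrm{dom}(h_1)\cap I_{h'}$ can be a proper subinterval of $I_{h'}$, $T^{n+1}(I_h)$ is then a proper subinterval of $T^n(I_{h'})$ with a new endpoint, and your induction does not close.

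The fix is to peel off the \emph{last} iterate rather than the first: writing the depth-$(n+1)$ cylinder as $C=C'\cap T^{-n}I_{j}$ with $C'$ of depth $n$ and $I_j$ a first-level cell, one gets $T^{n+1}(C)=T\bigl(T^n(C')\cap I_j\bigr)$. If $I_j\subseteq T^n(C')$ the image is $\mathrm{dom}(h_j)$; otherwise the intersection is an interval with one endpoint a cell boundary (which $T$ sends to $a$ or $b$) and the other an endpoint of $T^n(C')$. This is the bookkeeping the paper alludes to: every image $T^nI_h$ has one endpoint in $\{a,b\}$ and the other in the finite set $\{T^i(a),T^i(b):0\leq i\leq n\}$, which gives both finiteness and the endpoint-sharing property without any Markov hypothesis. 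Your plan, as written, cannot be repaired by "an explicit computation with $T_q(\pm\lambda_q/2)$" — that computation comes out the wrong way.
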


In particular, the other endpoint must be one of $T^i(a)$ or $T^i(b)$ for $0\leq i \leq n$, hence the finiteness.

\begin{cor}
For each $n$, there exists an $\epsilon_n>0$ such that $m(T^nI_h)\geq\epsilon_n$ for all $h\in\mathcal{H}^n$.
\end{cor}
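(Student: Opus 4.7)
The argument should be essentially a one-line consequence of the preceding proposition, and the plan is to make this explicit.

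First I would invoke the proposition to reduce the a priori infinite family $\{T^nI_h : h\in\mathcal{H}^n\}$ to a finite one. This is the crucial step that has already been done; without it, the infimum of the measures could conceivably be zero.

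Next I would check that every member of this finite family has strictly positive Lebesgue measure. By the proposition, each $T^nI_h$ is either all of $I$ (of measure $b-a>0$) or an interval sharing an endpoint with $I$ whose other endpoint lies in the finite set $\{T^i(a),T^i(b):0\leq i\leq n\}$. In either case, since $T^n$ restricted to $I_h$ is a continuous monotone bijection onto $T^nI_h$ and $I_h$ is a nondegenerate interval (being the domain of an inverse branch of $T^n$), the image $T^nI_h$ is also a nondegenerate interval, and hence has positive length.

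Finally I would set
\[
\epsilon_n \;:=\; \min\bigl\{ m(T^nI_h) : h\in\mathcal{H}^n\bigr\},
\]
where the minimum is taken over the finite collection of distinct images provided by the proposition. As the minimum of finitely many positive numbers, $\epsilon_n>0$, and by construction $m(T^nI_h)\geq \epsilon_n$ for every $h\in\mathcal{H}^n$. I do not anticipate any genuine obstacle here; the only subtlety worth noting explicitly is the passage from the possibly infinite index set $\mathcal{H}^n$ to the finite image set, which is exactly what the proposition supplies.
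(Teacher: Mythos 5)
Your argument is correct and is exactly the (unwritten) reasoning the paper intends: the corollary is stated as an immediate consequence of the finiteness of $\{T^nI_h : h\in\mathcal{H}^n\}$ from the proposition, and you have simply made explicit the two routine checks (nondegeneracy of each image and taking the minimum over a finite set). No issues.
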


\section{Lasota-Yorke Bounds}

In this section, we derive a Lasota-Yorke type inequality using the space of bounded variation. We first state some well-known facts about bounded variation. In order to apply Theorem 1, we work with complex-valued functions.

Recall that a function $f$ on a closed interval $J$ is of \textit{bounded variation} on $J$ if:
\begin{equation*}
\var{J} f \coloneqq \sup_P \sum_{i=1}^{\abs{P}} \abs{f(x_i)-f(x_{i-1})}<\infty,
\end{equation*}
where the supremum runs over all partitions $P=\{\min J=x_0<x_1<\cdots<x_n=\max J\}$. 

The \textit{bounded variation space} $BV_{\ell}(J)$ is the Banach space consisting of all left-continuous, integrable functions of bounded variation on $J$ with the norm,
\begin{equation*}
\norm{f}_{BV}\coloneqq \int_J \abs{f(x)}dx+\var{J} f
\end{equation*}

Taking each element of $BV_{\ell}(J)$ as its equivalence class modulo null sets, we may view the bounded variation space as a subspace $BV(J)$ of $L^1(J)$, with the variation now being taken as the infimum over all equivalent functions (see for example [6]): $\norm{f}_{BV}= \norm{f}_{L^1(J)}+\inf_{g=f\;\mathrm{a.e.}}\var{J} g$.

For $f,g\in BV(J)$, we have the following basic inequalities:
\begin{align*}
\var{J}fg&\leq\sup_J \abs{f}\;\var{J} g+\sup_J \abs{g}\;\var{J} f\,, \\
\var{J}fg&\leq\sup_J \abs{g}\;\var{J} f+\sup_J \abs{g'}\int_J \abs{f(x)}dx \;\;\;\;\mathrm{if}\;g\in C^1(J)\,, \\
\sup_J \abs{f}&\leq \var{J} f+\frac{1}{m(J)}\int_J \abs{f(x)}dx
,.
\end{align*}

In addition, if $J\subseteq I$ and $J$ shares an endpoint with $I$,
$\var{I} f\chi_J\leq\var{J} f+\sup_J \abs{f}$.

We now evaluate $\var{I}H^nf$:

\begin{align}
\var{I}H^nf&=\var{I} \left(\sum_{h\in\mathcal{H}^n} \abs{h'} f\circ h \cdot \chi_{T^nI_h}\right) \leq \sum_{h\in\mathcal{H}^n} \var{I}\left( \abs{h'} f\circ h \cdot \chi_{T^nI_h} \right)\nonumber\\
&\leq \sum_{h\in\mathcal{H}^n} \var{T^nI_h}\left( \abs{h'} f\circ h\right) + \sum_{h\in\mathcal{H}^n} \sup_{T^nI_h}\abs{\abs{h'} f\circ h}\nonumber\\
&= \sum_{h\in\mathcal{H}^n} \var{T^nI_h}\left(\frac{f\circ h}{\abs{(T^n)' \circ h}}\right) + \sum_{h\in\mathcal{H}^n} \sup_{T^nI_h}\abs{\frac{f\circ h}{(T^n)' \circ h}}\nonumber \\
&\leq 2\sum_{h\in\mathcal{H}^n} \var{T^nI_h}\left(\frac{f\circ h}{\abs{(T^n)' \circ h}}\right) + \sum_{h\in\mathcal{H}^n} \frac{1}{m(T^nI_h)}\int_{T^nI_h} \abs{\frac{f\circ h}{(T^n)' \circ h}} dx\nonumber \\
&= 2\sum_{h\in\mathcal{H}^n} \var{I_h}\left(\frac{f}{\abs{(T^n)'}}\right) + \sum_{h\in\mathcal{H}^n} \frac{1}{m(T^nI_h)}\int_{I_h} \abs{f(x)} dx\nonumber \\
&\leq 2\sum_{h\in\mathcal{H}^n} \sup_I\frac{1}{\abs{(T^n)'}} \var{I_h} f + \sum_{h\in\mathcal{H}^n} \left(\sup_I \abs{\left(\frac{1}{\abs{(T^n)'}}\right)'} + \frac{1}{m(T^nI_h)} \right) \int_{I_h}\abs{f(x)} dx\nonumber \\
&\leq 2\gamma^{2n}\,\var{I} f + \left(2\gamma \frac{1-\gamma^{2n}}{1-\gamma^2} + \frac{1}{\epsilon_n}\right) \int_I \abs{f(x)} dx \label{lsb}
\end{align}

Thus, fixing $k\in\mathbb{N}$ large so that $0<\rho\coloneqq2\gamma^{2k}<1$, we have proven the Lasota-Yorke inequality:

\begin{equation*}
\var{I} H^kf\leq\rho\,\var{I}f+M_0\norm{f}_1
\end{equation*}

for some large $M_0$. Since $\norm{H}_1=1$, it immediately follows that

\begin{equation}\label{lasota}
\norm{H^kf}_{BV}\leq\rho\norm{f}_{BV}+M\norm{f}_1\,
\end{equation}
where $M=M_0+1-\rho$.

\section{The Ionescu-Tulcea and Marinescu Theorem}

To obtain quasi-compactness of the operator $H$, we refer to the following theorem of Ionescu-Tulcea and Marinescu [4].

\begin{thm}
Let $(B,\abs{\cdot})$ and $(L,\norm{\cdot})$ be two complex Banach spaces with $L\subset B$. Let $U$ be an operator from $L$ to $L$, bounded with respect to both $\abs{\cdot}$ and $\norm{\cdot}$, which satisfies the additional conditions:

(a) if $f_n\in L$, $f\in B$ with $\abs{f_n-f}\rightarrow 0$ and $\norm{f_n}\leq M$ for all $n$, then $f\in L$ and $\norm{f}\leq M$;

(b) $\sup_{n} \abs{U^n}_L<\infty$;

(c) there exist $k\geq 1$, $0<\rho<1$, and $M$ such that $\norm{U^kf}\leq \rho\norm{f}+M\norm{f}$;

(d) for any bounded subset $L'$ of $(L,\norm{\cdot})$, $U^kL'$ has compact closure in $(B,\abs{\cdot})$.

Then the set $G$ of eigenvalues $\lambda$ of $U$ of modulus 1 is finite, the corresponding eigenspaces $E_{\lambda}$ are finite-dimensional, and $U^n$ can be represented by bounded linear operators $U_{\lambda}$ and $V$ as:

\begin{equation}
U^n=\sum_{\lambda\in G} \lambda^n U_{\lambda}+V^n
\end{equation}
such that
\begin{equation}
U_{\lambda}^2=U_{\lambda},\;\;U_{\lambda}U_{\lambda'}=0\;\;\mathrm{if}\;\; \lambda'\neq \lambda,\;\;U_{\lambda}V=VU_{\lambda}=0,\;\; U_{\lambda}L=E_{\lambda},
\end{equation}
and $V$ has spectral radius strictly less than 1.
\end{thm}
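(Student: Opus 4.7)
The plan is to combine the Lasota--Yorke bound (c) with the compactness (d) to show that $U$ is quasi-compact on $L$ (essential spectral radius strictly less than $1$), then to use the uniform iterate bound (b) both to locate the peripheral spectrum on the unit circle and to rule out Jordan blocks, and finally to invoke Riesz--Dunford functional calculus to build the claimed decomposition. The main obstacle is the first of these: relating the abstract Lasota--Yorke inequality to a quantitative measure-of-non-compactness bound on $L$, which requires carefully bridging the two topologies $\abs{\cdot}$ and $\norm{\cdot}$ via condition (a).

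\textbf{Quasi-compactness.} For any $\norm{\cdot}$-bounded $L'\subset L$, condition (d) gives that $U^kL'$ is $\abs{\cdot}$-relatively compact, and (a) ensures that $\abs{\cdot}$-limits of $\norm{\cdot}$-bounded sequences in $L$ remain in $L$. Iterating the Lasota--Yorke inequality (c), one obtains $\alpha(U^{nk}L')\le\rho^n\alpha(L')$ for the Kuratowski measure of non-compactness $\alpha$ of sets in $L$---informally, the non-compact part of $U^{nk}L'$ shrinks like $\rho^n$ in $\norm{\cdot}$ while the compact part is controlled via (d). Nussbaum's formula then gives $r_{\mathrm{ess}}(U)\le\rho^{1/k}<1$.

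\textbf{Peripheral spectrum.} Quasi-compactness implies that $\mathrm{spec}(U)\cap\{\abs{\lambda}>\rho^{1/k}\}$ consists of isolated eigenvalues with finite-dimensional generalized eigenspaces, accumulating only toward the essential-spectrum disk of radius $\rho^{1/k}$. The uniform bound in (b) gives spectral radius $\le 1$, so the peripheral spectrum $G=\{\lambda\in\mathrm{spec}(U):\abs{\lambda}=1\}$ is a finite subset of the unit circle with finite-dimensional generalized eigenspaces.

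\textbf{Decomposition.} For each $\lambda\in G$, let $\gamma_\lambda$ be a small positively oriented circle separating $\lambda$ from the rest of $\mathrm{spec}(U)$, and define the Riesz projection $U_\lambda=\frac{1}{2\pi i}\oint_{\gamma_\lambda}(\zeta I-U)^{-1}\,d\zeta$ together with the complementary operator $V=U(I-\sum_{\lambda\in G}U_\lambda)$. Standard Cauchy-type arguments yield $U_\lambda^2=U_\lambda$, $U_\lambda U_{\lambda'}=0$ for $\lambda\ne\lambda'$, $U_\lambda V=VU_\lambda=0$, and $\mathrm{spec}(V)\subseteq\{\abs{\zeta}\le\rho^{1/k}\}$ so that $r(V)<1$. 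Condition (b) is invoked one last time to rule out Jordan blocks at unit-modulus eigenvalues, giving $U_\lambda L=E_\lambda$: any block of size $\ge 2$ at some $\lambda\in G$ would produce $f,g\in L$ with $(U-\lambda I)g=f\ne 0$ and hence $U^ng=\lambda^ng+n\lambda^{n-1}f+\cdots$, so that $\norm{U^ng}$ grows at least linearly in $n$, contradicting $\sup_n\norm{U^n}_L<\infty$. With semisimplicity in hand, the identity $U^n=\sum_{\lambda\in G}\lambda^nU_\lambda+V^n$ is the standard consequence of the Riesz--Dunford calculus.
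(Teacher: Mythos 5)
The paper does not prove this theorem at all: it is quoted verbatim as the Ionescu--Tulcea and Marinescu theorem from reference [4] and used as a black box, so there is no in-paper proof to compare against. Your sketch is the now-standard Hennion-style route (Lasota--Yorke plus compact embedding $\Rightarrow$ bound on the measure of non-compactness $\Rightarrow$ Nussbaum's essential-spectral-radius formula, then Riesz--Dunford projections), which is a genuinely different and considerably slicker argument than the original 1950 proof; as a plan it is sound and would establish the stated conclusions. Three details deserve attention if you write it out. First, the inequality in (c) as printed ($\norm{U^kf}\leq \rho\norm{f}+M\norm{f}$) is a typo for $\norm{U^kf}\leq \rho\norm{f}+M\abs{f}$ --- you have correctly read it that way, but as literally stated the hypothesis is vacuous. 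Second, your claim $\alpha(U^{nk}L')\le\rho^n\alpha(L')$ is slightly too clean: Hennion's argument gives $\alpha(U^{nk}B_L)\le C\rho^n$ for the unit ball, with a constant involving the norm of the embedding $L\hookrightarrow B$ (which the theorem tacitly assumes, e.g.\ $\abs{\cdot}\le c\norm{\cdot}$ on $L$); the exponential rate is all Nussbaum's formula needs, but the covering argument that produces it should be spelled out. Third, condition (b) controls only $\abs{U^n}$ on $L$, not $\norm{U^n}$; to get spectral radius $\le 1$ in $(L,\norm{\cdot})$ and to run the Jordan-block exclusion you must first combine (b) with the iterated form of (c) to deduce $\sup_n\norm{U^n}_{L\to L}<\infty$ (alternatively, run the Jordan-block growth estimate in the $\abs{\cdot}$ norm directly, using that $f\neq 0$ in $L\subset B$ forces $\abs{f}>0$). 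With those repairs the proposal is a complete and correct proof.
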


We claim that by taking the two spaces to be $(L^1(I),\norm{\cdot}_1)$ and $(BV(I),\norm{\cdot}_{BV})$ and putting $U=H$, the four conditions (a) to (d) are satisfied and $H$ admits a spectral decomposition of the above form.

\bigskip
\textit{Proof of Claim.}
(a) follows from the fact that the ball $A=\{f\in BV(I): \norm{f}_{BV}\leq M\}$ is $L^1$-compact; see [7]. Similarly, since the image $H^kL'$ of a bounded subset $L'$ of $BV(I)$ is bounded w.r.t. $\norm{\cdot}_{BV}$, it has $L^1$-compact closure, hence (d). For (b), $\sup_{n} \norm{H^n}_1\leq \sup_{n} \norm{H}_1^n\leq 1$. The Lasota-Yorke bound (c) was shown in the preceding section. \qed

\section{Spectral Gap}
Recall that the operator $H$ preserves the invariant density $f_1$ of the transformation $T$, which is of bounded variation; thus 1 is an eigenvalue of $H$. In this section, we prove that 1 is a simple eigenvalue, and there are no other complex eigenvalues of modulus 1. Together with quasi-compactness, this implies that $H$ has spectral gap. We will show this under the following assumption $(*)$ on $T$, which we verify for the Rosen continued fractions in Section 6.

\bigskip
\textbf{Condition $(*)$.} For any subintervals $J,K$ of $I$, there exists $N\geq 0$ such that $m(T^{N}(I)\cap T^{N}(J))>0$.

\begin{lemma}
There exist positive constants $C$, $D$ such that for all $n\geq 0$ and $0\leq r<k$,

\begin{align*}
\norm{H^{nk}f}_{BV}\leq \rho^n\norm{f}_{BV}+C\norm{f}_1,\;\;
\norm{H^rf}_{BV}\leq D\norm{f}_{BV}.
\end{align*}
\end{lemma}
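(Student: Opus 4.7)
The plan is to prove the two inequalities separately. Both are essentially routine consequences of what has already been established: the first is an iteration of the Lasota-Yorke bound (\ref{lasota}), while the second comes from specializing the pre-Lasota-Yorke estimate (\ref{lsb}) to small iterates.

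For the first inequality, I would proceed by induction on $n$, using (\ref{lasota}) applied to $H^{nk}f$ in place of $f$ together with the $L^1$-contractivity $\norm{H^{nk}f}_1\leq\norm{f}_1$. Writing $a_n\coloneqq\norm{H^{nk}f}_{BV}$, (\ref{lasota}) gives $a_{n+1}\leq \rho\,a_n+M\norm{H^{nk}f}_1\leq \rho\,a_n+M\norm{f}_1$. Iterating yields
\begin{equation*}
a_n\leq \rho^n\norm{f}_{BV}+M\norm{f}_1\sum_{j=0}^{n-1}\rho^j\leq \rho^n\norm{f}_{BV}+\frac{M}{1-\rho}\norm{f}_1,
\end{equation*}
so the choice $C\coloneqq M/(1-\rho)$ works.

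For the second inequality, I would observe that the computation culminating in (\ref{lsb}) was valid for any $n\geq 0$, not merely for $n=k$. Substituting $n=r$ gives
\begin{equation*}
\var{I} H^rf\leq 2\gamma^{2r}\var{I}f+\left(2\gamma\frac{1-\gamma^{2r}}{1-\gamma^2}+\frac{1}{\epsilon_r}\right)\norm{f}_1,
\end{equation*}
and combining this with $\norm{H^rf}_1\leq\norm{f}_1$ yields $\norm{H^rf}_{BV}\leq D_r\norm{f}_{BV}$ for an explicit constant $D_r$ depending on $r$. Since $r$ ranges over the finite set $\{0,1,\dots,k-1\}$, setting $D\coloneqq\max_{0\leq r<k}D_r$ finishes the proof.

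Neither step involves a real obstacle: the only care required is to handle the geometric summation correctly in the induction for the first estimate, and to note that $\epsilon_r$ from Corollary 1 is well-defined for each of the finitely many $r<k$, so the resulting $D$ is finite. No new technical input beyond Section 3 is needed.
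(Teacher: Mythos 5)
Your proposal is correct and follows essentially the same route as the paper: iterate the Lasota-Yorke bound (\ref{lasota}) with $L^1$-contractivity to get the geometric sum $C=M/(1-\rho)$, and apply the estimate (\ref{lsb}) with $n=r$ plus a maximum over the finitely many $r<k$ for the second bound. Your explicit mention of adding $\norm{H^rf}_1\leq\norm{f}_1$ to pass from the variation bound to the $BV$ norm is a small point the paper leaves implicit, but the argument is the same.
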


\begin{proof}
By repeated application of the Lasota-Yorke bound,
\begin{align*}
\norm{H^{nk}f}_{BV}&\leq \rho\norm{H^{(n-1)k}f}_{BV}+M\norm{f}_1\\
&\leq \rho\left(\rho\norm{H^{(n-2)k}f}_{BV}+M\norm{f}_1 \right)+M\norm{f}_1\\
&\leq\cdots\leq \rho^n\norm{f}_{BV}+\frac{M}{1-\rho}\norm{f}_1
\end{align*}
In addition, by (\ref{lsb}),
\begin{equation*}
\norm{H^rf}_{BV}\leq 2\gamma^{2r}\,\var{I} f + \left(2\gamma \frac{1-\gamma^{2r}}{1-\gamma^2} +\frac{1}{\epsilon_r}\right)\norm{f}_1 \leq \left(2+\frac{2\gamma}{1-\gamma^2}+\max_{0\leq j<k} \frac{1}{\epsilon_j} \right) \norm{f}_{BV}.
\end{equation*}
\end{proof}

Since $H$ preserves the integral over $I$, if $\lambda\neq 1$, any corresponding eigenfunction must have integral zero. The following proposition allows us to control the behaviour of $H$ on such eigenspaces.

\begin{prop}
	For $f\in BV$ with $\int_I f dm=0$, $H^n f$ converges to 0 in $L^1$-sense as $n\rightarrow \infty$.
\end{prop}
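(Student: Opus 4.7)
The plan is to show that $\norm{H^nf}_1$ decreases monotonically to a limit $L\geq 0$ and then exploit condition $(*)$ to rule out $L>0$. Since the coefficients $\abs{h'(x)}$ defining $H$ are real, we may split $f$ into real and imaginary parts and assume without loss of generality that $f$ is real-valued. By Lemma 2, $\sup_n\norm{H^nf}_{BV}<\infty$, so by the $L^1$-compactness of BV-balls already invoked in verifying hypothesis (a) of Theorem 1, the sequence $\{H^nf\}$ has $L^1$-accumulation points, all of which again lie in $BV$. Since $\norm{H}_1\leq 1$, the scalar sequence $\norm{H^nf}_1$ is non-increasing and converges to some $L\geq 0$, which we aim to show is $0$.

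Let $g$ be any $L^1$-accumulation point, with $H^{n_j}f\to g$. Then $g\in BV$, $\int g\,dm=0$, and $\norm{g}_1=L$. Since $H^k$ is $L^1$-continuous, $H^{n_j+k}f\to H^kg$; combined with $\norm{H^{n_j+k}f}_1\to L$, this forces $\norm{H^kg}_1=L$ for every $k\geq 0$. Together with the pointwise bound $\abs{H^kg}\leq H^k\abs{g}$ and the identity $\int H^k\abs{g}\,dm=\int\abs{g}\,dm=L$, this forces $\abs{H^kg}=H^k\abs{g}$ almost everywhere on $I$. Reading off the defining sum of $H^k$, the equality says that for almost every $x\in I$, all nonzero terms $\abs{h'(x)}\,g(h(x))$ indexed by $h\in\mathcal{H}^k$ with $x\in T^kI_h$ share the same sign.

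Suppose toward contradiction that $L>0$. Then $g$ is nonzero with zero integral, so both $\{g>0\}$ and $\{g<0\}$ have positive measure; since a left-continuous BV representative is continuous off a countable set, we may choose nondegenerate subintervals $J_+,J_-\subset I$ on which $g>0$ and $g<0$ almost everywhere, respectively. Condition $(*)$ supplies $N\geq 0$ with $m(T^NJ_+\cap T^NJ_-)>0$; at almost every $x$ in this intersection the equality $\abs{H^Ng(x)}=H^N\abs{g}(x)$ holds, and by construction there are branches $h_+,h_-\in\mathcal{H}^N$ with $h_\pm(x)\in J_\pm$, so that $g(h_+(x))>0>g(h_-(x))$, directly contradicting the same-sign conclusion. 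Hence $L=0$, and $H^nf\to 0$ in $L^1$. The main delicacy is extracting the combinatorial same-sign constraint on all inverse branches from the single norm identity $\norm{Hg}_1=\norm{g}_1$; once this rigidity is in hand, condition $(*)$ is exactly the mixing input needed to close the argument.
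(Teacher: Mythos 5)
Your proof is correct and follows essentially the same route as the paper: extract an $L^1$-accumulation point via Lemma 2 and the compactness of BV-balls, use monotonicity of $\norm{H^nf}_1$ to get $\norm{H^kg}_1=\norm{g}_1$ for all $k$, and then invoke Condition $(*)$ to contradict the equality case of $\abs{H^Ng}\leq H^N\abs{g}$. Your ``same-sign of all branch terms'' formulation is just a pointwise restatement of the paper's step that $H^Nf_+$ and $H^Nf_-$ are both strictly positive on $A$, forcing strictness in the triangle inequality.
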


\begin{proof}
It suffices to prove this statement for real-valued $f$. Suppose $\norm{f}_{BV}\leq 1$. Any positive integer $m$ can be expressed as $m=nk+r$ with $n\geq 0$, $0\leq r<k$. Then, by the inequalities of Lemma 2,
\begin{equation*}
\norm{H^mf}_{BV}=\norm{H^{nk+r}f}_{BV}\leq \rho^{n}\norm{H^rf}_{BV}+C\norm{H^rf}_1\leq(C+D)\norm{f}_{BV}.
\end{equation*}
Since the set $\left\{f:\norm{f}_{BV}\leq C+D\right\}$ is $L^1$-compact, there exists a convergent (in $L^1$-sense) subsequence ${H^{n_s}f}$ whose limit is denoted by $f_0\in BV(I)$.

Now we show that $\norm{H^nf_0}_{1}=\norm{f_0}_1$ for all positive integers $n$. Fix $\epsilon>0$. For all large $s$, $\norm{H^{n_s}f-f_0}_1<\epsilon$, so that $\norm{H^nf_0}_1\geq \norm{H^{n+n_s}f}_1-\epsilon$.
Fixing $t>s$ such that $n+n_s<n_t$, 
\begin{equation*}
\norm{H^{n+n_s}f}_1\geq\norm{H^{n_t}f}_1 \geq\norm{f_0}_{1}-\epsilon.
\end{equation*}
Thus $\norm{H^nf_0}_1\geq \norm{f_0}_1-2\epsilon$; letting $\epsilon$ tend to 0, we have $\norm{H^nf_0}_1\geq \norm{f_0}_1$ for all $n$. The opposite inequality is trivial.

Since $\int_I H^{n_s}fdm=\int_I fdm=0$, we have $\int_I f_0 dm=0$. Suppose $f_0\neq 0$ a.e. Viewing $f_0$ as a left-continuous function in $BV_{\ell}(I)$, there exists $a<\alpha,\beta<b$ such that $f_0(\alpha)>0, f_0(\beta)<0$, which implies $f_0$ is strictly positive (resp. negative) on $[\alpha-\epsilon,\alpha]$ (resp. $[\beta-\epsilon,\beta]$), for some $\epsilon>0$.

Note that if $f$ is positive on $J\subseteq I$, then $Hf$ is strictly positive on $T(J)$. By Condition $(*)$, there exists $N$ such that $A=T^N([\alpha-\epsilon,\alpha]) \cap T^N([\beta-\epsilon,\beta])$ has positive measure.

Decompose $f_0$ into its positive and negative parts, $f_0=f_+-f_-$. Then,
\begin{equation*}
\norm{H^Nf_+}_1+\norm{H^Nf_-}_1 =\norm{f_0}_1 =\norm{H^Nf_0}_1=\norm{H^N f_+-H^{N}f_-}_1.
\end{equation*}

However, both $H^Nf_+$ and $H^Nf_-$ are strictly positive on $A$, so the triangle inequality must be strict; a contradiction. We conclude that $f_0=0$ a.e.
\end{proof}

We are now in a position to prove the existence of a spectral gap for $H$.

\begin{thm}
The eigenvalue 1 is simple, and there are no other eigenvalues of modulus 1.
\end{thm}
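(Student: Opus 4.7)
My plan is to deduce Theorem 2 almost immediately from Proposition 1, which already contains all the hard work — in particular the mixing argument via Condition $(*)$. The key elementary ingredient is that $H$ preserves Lebesgue integrals, $\int_I H f\, dm = \int_I f\, dm$, which follows from the change-of-variables derivation of $H$. Two consequences then guide the two halves of the proof: any eigenfunction of $H$ with eigenvalue $\lambda \neq 1$ must have zero integral, while any fixed point of $H$ can be corrected by subtracting a multiple of $f_1$ to produce a zero-integral fixed point.

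First I would handle the absence of other modulus-one eigenvalues. Suppose $\abs{\lambda} = 1$, $\lambda \neq 1$, and $g \in BV$ is a nonzero eigenfunction with $Hg = \lambda g$. Integrating gives $(1-\lambda) \int_I g\, dm = 0$, so $\int_I g\, dm = 0$, and Proposition 1 yields $H^n g \to 0$ in $L^1$. But $\norm{H^n g}_1 = \abs{\lambda}^n \norm{g}_1 = \norm{g}_1$, forcing $g = 0$, a contradiction.

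For simplicity of the eigenvalue $1$, I would normalize $f_1$ so that $\int_I f_1\, dm = 1$, take any $g \in BV$ with $Hg = g$, and set $c = \int_I g\, dm$ and $h = g - c f_1$. Then $Hh = h$ and $\int_I h\, dm = 0$, so $H^n h = h$ for all $n$ while Proposition 1 forces $\norm{H^n h}_1 \to 0$; hence $h = 0$ in $L^1$ and $g = c f_1$, showing $E_1$ is one-dimensional. No genuine obstacle remains; the only point worth checking is that Proposition 1 may be invoked on complex-valued $g$, which is legitimate since $BV$ was set up as complex-valued in Section 3 and the real-valued reduction used inside the proof of Proposition 1 applies separately to the real and imaginary parts of any $f$ with $\int_I f\, dm = 0$.
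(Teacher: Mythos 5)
Your proposal is correct and follows essentially the same route as the paper: subtract $\left(\int_I g\,dm\right) f_1$ from a fixed point of $H$ to get a zero-integral fixed point killed by Proposition 1, and use integral preservation to show any eigenfunction for $\lambda\neq 1$ has zero integral and hence cannot persist under $H^n$. Your added remark about reducing complex-valued $g$ to real and imaginary parts is a sensible clarification of a point the paper leaves implicit.
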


\begin{proof}
Suppose that $f_2$ is another eigenfunction corresponding to 1. Let $g=f_1\int_I f_2dm-f_2$. Then $Hg=g$ and $\int_I gdm=0$. By the preceding proposition, $\norm{g}_1=\norm{H^ng}_1\rightarrow 0$, so $g=0$ a.e. Thus $f_2$ must be a constant multiple of $f_1$.

It remains to show that there is no other eigenvalue on the unit circle. Suppose that $\lambda\neq 1$, $\abs{\lambda}=1$ is an eigenvalue of $H$, with a corresponding eigenfunction $g$. Since $$\int_I gdm=\int_I Hgdm=\lambda\int_I gdm,$$
we have $\int_Igdm=0$. Therefore $H^ng=\lambda^ng$ must converge to 0 by the preceding proposition, contradicting $\abs{\lambda}=1$.
\end{proof}

\section{Condition $(*)$ for the Rosen Map}
In this section, we demonstrate Condition $(*)$ for the Rosen continued fractions. In fact, it follows trivially from the theorem below.

\begin{thm}
For any subinterval $[c,d]\subseteq [-\lambda_{q}/2,\lambda_{q}/2]$, $T_q^N ([c,d])=[-\lambda_{q}/2,\lambda_{q}/2]$ a.e. for large enough $N$.
\end{thm}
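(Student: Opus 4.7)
The approach is to reduce the theorem to showing that $T_q^N([c,d])$ eventually contains a one-sided neighborhood of $0$. For any $\epsilon > 0$, as $x$ ranges over $(0,\epsilon)$ the quantity $1/x$ sweeps all of $(1/\epsilon,\infty)$, and for each integer $a \geq a_0(\epsilon)$ the complete interior sub-branch $(1/((a+1/2)\lambda_q),\, 1/((a-1/2)\lambda_q)]$ maps bijectively onto $[-\lambda_q/2,\lambda_q/2)$ under $T_q$. Hence $T_q((0,\epsilon))=I_q$ up to a null set, and analogously for $(-\epsilon,0)$. So once a one-sided neighborhood of $0$ appears within $T_q^N([c,d])$, one further iteration produces $I_q$ a.e.

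To produce such a neighborhood, I would invoke Lemma~1: since $|T_q'|\geq\gamma^{-2}>1$ on each branch, a component of $T_q^n([c,d])$ that remains contained in a single branch of $T_q$ at each step has length multiplied by at least $\gamma^{-2}$ per iteration. Since lengths are bounded by $\lambda_q$, within $O(\log(1/|d-c|))$ iterations some component must grow large enough to span a discontinuity of $T_q$.

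The discontinuities of $T_q$ are $0$ and the points $\pm 1/((a+1/2)\lambda_q)$ for $a\geq 1$. If the spanned discontinuity is $0$, we are done. If $x^*=\pm 1/((a+1/2)\lambda_q)$ instead, I would use that $T_q$ restricts to each interior branch as a decreasing bijection onto $[-\lambda_q/2,\lambda_q/2)$ with the branch endpoints mapping to $\pm\lambda_q/2$; the image of the spanning component then contains one-sided neighborhoods of both endpoints $\pm\lambda_q/2$. I would then repeat the expansion argument on these endpoint neighborhoods, possibly inducing further spanning events at nonzero discontinuities.

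I expect the main obstacle to be verifying termination, i.e., that this iterative procedure yields a neighborhood of $0$ in finitely many steps, rather than an unbounded cascade of endpoint-neighborhoods. To resolve it I would rely on the classical Markov structure of Rosen continued fractions, under which the forward orbits of the endpoints $\pm\lambda_q/2$ are pre-periodic. For small $q$ one may check termination directly: $T_3(\lambda_3/2)=T_4(\lambda_4/2)=0$, $T_6^2(\lambda_6/2)=0$, and $T_5^3(\lambda_5/2)=0$. In general, combining finiteness of the pre-periodic orbit with the exponential growth of neighborhood diameters forces the neighborhood eventually to span $0$, at which point one more application of $T_q$ completes the proof.
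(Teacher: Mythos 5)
Your overall strategy --- grow a component of the image until it spans a discontinuity, observe that spanning $0$ finishes the proof in one more step because infinitely many full branches accumulate at $0$, and otherwise restart from one-sided neighborhoods of $\pm\lambda_q/2$ produced by the splitting --- is a legitimate route and genuinely different from the paper's. The paper never tracks endpoint orbits; it runs a measure-growth contradiction instead: it classifies the putative ``bad'' intervals into four types, shows that an interval straddling a nonzero discontinuity must lie in $\left\{\abs{x}<\tfrac{1}{\sqrt{2}}-\epsilon\right\}$ (otherwise its second image is already the whole interval), uses that $\abs{T_q'}>2$ there to show the folded image gains measure by a uniform factor $\eta>1$ even after the split, and derives a contradiction from boundedness of measure. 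Your first three paragraphs are sound and reprove the same local ingredients (full branches near $0$, uniform expansion, the two-endpoint image of a straddling interval).

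The genuine gap is in your termination step. ``Finiteness of the pre-periodic orbit combined with exponential growth of diameters'' does not force the neighborhood to span $0$: each time a growing one-sided neighborhood of an orbit point spans a nonzero discontinuity, it splits, and the resulting one-sided neighborhoods of $\pm\lambda_q/2$ may be far smaller than the component that just split --- the exponential growth is reset, so a priori the cascade of endpoint-neighborhoods could recur forever without ever covering $0$. Mere pre-periodicity supplies no monotone quantity to rule this out; this is precisely the scenario the paper's $1/\sqrt{2}$-threshold estimate is designed to exclude. What actually closes your argument is the stronger classical fact that you verify for $q\leq 6$ but then fail to invoke in general: $T_q^{n}(\lambda_q/2)=0$ for some finite $n=n(q)$, for every $q\geq 3$ (Rosen's finiteness of the $\lambda$-expansion of $\lambda/2$). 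Granting this, and checking that the intermediate orbit points lie in the interiors of branches so that a sufficiently small one-sided neighborhood of $\pm\lambda_q/2$ tracks the orbit one-sidedly, the endpoint phase terminates after exactly $n(q)$ steps in a one-sided neighborhood of $0$ --- no growth argument is needed there at all. You should either import that fact explicitly together with the one-sided tracking verification, or replace the termination step by a quantitative expansion estimate of the kind the paper uses.
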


\begin{proof}
We prove the theorem when $q\geq 5$; the cases $q=3,4$ are similarly obtained, but with fewer cases to consider.

Let $\sigma_{q}=\lambda_q/2$. The ranges $I_j$ of the inverse branches $h_j$ are, with appropriate indexing:
\begin{align*}
I_1 = [\frac{1}{3\sigma_{q}},2\sigma_{q}],\;\;\;
I_{2n+1} = [\frac{1}{(2n+3)\sigma_q},\frac{1}{(2n+1)\sigma_q}], \;\;\;I_{2n} = -\,I_{2n-1}\;\;(n\geq 1)\,.
\end{align*}
Note that $\frac{1}{\sqrt{2}}\in I_1$ and $T_{q}(\frac{1}{\sqrt{2}})=\sqrt{2}-\lambda_{q}<0$, so we may choose $\epsilon>0$ with $\frac{1}{\sqrt{2}}-\epsilon \in I_1$ and $T_{q}(\frac{1}{\sqrt{2}}-\epsilon)<0$.

\bigskip
We shall call an interval which does not satisfy the conclusion of the theorem a `bad interval.' Suppose a bad interval $[c,d]$ exists. Clearly $0\notin[c,d]$, for otherwise we may take $N=1$.

If $0<c<d$, let $c\in I_{2n+1}$, $d\in I_{2m+1}$ $(n\geq m)$. If $n>m+1$, $T_q[c,d]$ must be the full interval, a contradiction. Thus $n=m$ or $m+1$. Similarly, if $c<d<0$, let $c\in I_{2n}$, $d\in I_{2m}$, where $n=m$ or $m-1$.

For these four types of bad intervals, the image $T_{q}[c,d]$ are given below.

\begin{align*}
\mathrm{type\;(a):} &\quad\quad T_{q}[c,d]=[T_q(d),T_q(c)]\quad (0<c<d,\;n=m) \\
\mathrm{type\;(b):} &\quad\quad T_{q}[c,d]=[T_q(c),T_q(d)]\quad (c<d<0,\;n=m) \\
\mathrm{type\;(c):} &\quad\quad T_{q}[c,d]=[-\sigma_q,T_q(c)]\cup[T_q(d),\sigma_q]\quad (0<c<d,\;n=m+1) \\
\mathrm{type\;(d):} &\quad\quad T_{q}[c,d]=[-\sigma_q,T_q(d)]\cup[T_q(c),\sigma_q]\quad (c<d<0,\;n=m-1)
\end{align*}
\bigskip
Moreover, for type (c), if $d\geq\frac{1}{\sqrt{2}}-\epsilon$, $T_q(d)<0$ so $0\in T_q[c,d]$. Therefore $T_{q}^{2}[c,d]=[-\lambda_{q}/2,\lambda_{q}/2]$, a contradiction. This implies $0<c<d<\frac{1}{\sqrt{2}}-\epsilon$.
Similarly, $-\frac{1}{\sqrt{2}}+\epsilon<c<d<0$ holds for type (d). Now, define a sequence of bad intervals ${[p_n,q_n]}$ by $[p_0,q_0]=[c,d]$, and
\begin{align*}
[p_{n+1},q_{n+1}]=&[T_q(q_n),T_q(p_n)] \quad\quad\quad\quad\quad\quad \mathrm{if}\;[p_n,q_n] \; \mathrm{is}\;\mathrm{of}\;\mathrm{type\;(a)}\\
&[T_q(p_n),T_q(q_n)] \quad\quad\quad\quad\quad\quad \mathrm{if}\;[p_n,q_n]\;\mathrm{is}\;\mathrm{of}\;\mathrm{type\;(b)}\\
&[-T_q(c),\sigma_q]\cup[T_q(d),\sigma_q] \,\quad\quad \mathrm{if}\;[p_n,q_n]\;\mathrm{is}\;\mathrm{of}\;\mathrm{type\;(c)}\\
&[-T_q(d),\sigma_q]\cup[T_q(c),\sigma_q] \,\quad\quad \mathrm{if}\;[p_n,q_n]\;\mathrm{is}\;\mathrm{of}\;\mathrm{type\;(d)}
\end{align*}
This sequence is well defined since $T_q(I)=T_q(-I)$ for all $I\subset[-\lambda_{q}/2,\lambda_{q}/2]$.

\bigskip
We now show that there exists an $\eta>1$ so that $m([p_{n+1},q_{n+1}])>\eta\cdot m([p_{n},q_{n}])$ for all $n$, which yields the desired contradiction as $n\rightarrow \infty$.

Since $\abs{T_q'(x)}=1/x^2$, for $[p_n,q_n]$ of type (a) or (b), $$m([p_{n+1},q_{n+1}])\geq\cfrac{1}{\sigma_q^2}m([p_n,q_n]).$$
For $[p_n,q_n]$ of type (c) or (d), since $\max\{|p_n|,|q_n|\}<\frac{1}{\sqrt{2}}-\epsilon$, 
$$|T_{q}'(x)|\geq\frac{1}{(1/\sqrt{2}-\epsilon)^2}\;\;\;\forall x \in [p_n,q_n]$$
Therefore, for type (c),
$$m([p_{n+1},q_{n+1}])\geq \frac{1}{2} \left(m([-\sigma_q,T_q(c)])+m([T_q(d),\sigma_q])\right) \geq \frac{1}{2}\frac{1}{(1/\sqrt{2}-\epsilon)^2}m([p_n,q_n])\,.$$
And the same holds for type (d).
\end{proof}

\section{Central Limit Theorem}
For the statement of the Central Limit Theorem for operators with a spectral gap, we refer to [2]. For $f\in BV(I)$, let $S_{N}f$ denote the Birkhoff sum $\sum_{k=0}^{N-1} f\circ T^{k}$, and let $\mu=f_{1}m$ be the invariant measure of $T$ on $I$.

Following Broise, we define a condition on functions $f$ of bounded variation, which is equivalent to a degeneracy $\sigma=0$ of the variance of the distribution of Birkhoff sums.

\bigskip
\textbf{Condition (H).} There exists $u\in L^{2}(\mu)$ with $f=u-u\circ T+ \int_I fd\mu$.

\begin{thm}
	For $f\in BV(I)$ that does \textup{not} satisfy Condition (H), there exists $\sigma>0$ so that:
$$\lim_{N\rightarrow\infty} \mu\left[\frac{S_{N}f-N\int_I fd\mu}{\sigma\sqrt{N}}\leq v\right]=\frac{1}{\sqrt{2\pi}}\int_{-\infty}^{v}e^{-t^2/2}dt\quad \mathrm{for\;all\;} v.$$
\end{thm}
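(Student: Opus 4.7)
The plan is to invoke the central limit theorem of Broise [2] as a black box, having already verified all of its hypotheses in the preceding sections. Broise's result applies to a piecewise expanding map whose transfer operator (i) acts boundedly on $BV$ with a Lasota-Yorke type inequality, (ii) is quasi-compact, and (iii) has $1$ as the unique simple eigenvalue of modulus $1$; for such a system, every observable $f \in BV$ not satisfying Condition (H) obeys a non-degenerate CLT. Thus the proof reduces to marshalling the output of Sections 3-6.

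Concretely, Section 2 establishes that $T_q$ is a proper one-dimensional Iwasawa continued fraction, with $\gamma = \lambda_q/2 < 1$ for every $q \geq 3$, and that there exists an absolutely continuous invariant probability measure $\mu = f_1 m$. Section 3 supplies the Lasota-Yorke bound \eqref{lasota} for $H$ on $BV(I)$, and Section 4 deduces quasi-compactness of $H$ via the Ionescu-Tulcea-Marinescu Theorem. Section 5 (Theorem 2) shows that $1$ is a simple eigenvalue and the only eigenvalue of $H$ on the unit circle, conditional on Condition $(*)$, which is in turn verified for the Rosen map in Section 6 (Theorem 3). Together these give precisely the spectral gap demanded by Broise.

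Applying the CLT of [2] then produces $\sigma \geq 0$ such that $(S_N f - N \int_I f d\mu)/\sqrt{N}$ converges in distribution under $\mu$ to $\mathcal{N}(0,\sigma^2)$. To conclude that $\sigma > 0$, I would cite Broise's own characterization of degeneracy: $\sigma = 0$ holds if and only if $f - \int_I f d\mu$ admits an $L^2(\mu)$-coboundary representation $u - u\circ T$, which is precisely Condition (H). Since Condition (H) is assumed to fail, necessarily $\sigma > 0$, and the claimed normalized CLT holds.

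I do not expect this step to present a genuine obstacle, since the substantive work—establishing the spectral gap—has been carried out in the previous sections, and Broise's perturbative analysis of the characteristic function of $S_N f$ via the family $H_t f = H(e^{itf} f)$ then runs automatically. The only point requiring attention is to confirm that our Iwasawa framework fits the exact regularity and boundary assumptions of [2]; this should follow from $T_q$ being a piecewise $C^2$ expanding map whose countably many inverse branches all have full range $I$ except possibly the two leftmost and rightmost branches, which is the standard piecewise-expanding setting in which Broise's theorem is formulated.
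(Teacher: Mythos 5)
Your proposal is correct and follows essentially the same route as the paper: Theorem 4 is obtained by citing Broise's central limit theorem from [2], whose hypotheses (Lasota--Yorke bound, quasi-compactness via Ionescu-Tulcea--Marinescu, and the simple, unique peripheral eigenvalue $1$ established through Condition $(*)$) are exactly the content of Sections 3--6, with Condition (H) characterizing the degenerate case $\sigma = 0$. The paper offers no further argument beyond this reduction, so your write-up matches its proof in substance.
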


In particular, by taking $f$ to be constant on each subinterval $I_h$, we may interpret $f$ as a cost function on the set of inverse branches $\mathcal{H}$. (The orbits which hit the endpoints of $I_h$'s have measure zero and may be ignored.) The condition $f\in BV(I)$ is satisfied, for example, by monotonic and bounded costs with respect to the integer values $a_i$ in the expansion (\ref{expansion}). In this case, Condition (H) is satisfied only if $f$ is constant on all branches excluding the leftmost one, as all other branches admit fixed points.

\begin{cor}
For any nonconstant, monotonic, and bounded cost on the digits $\abs{a_i}$, the total cost of an execution of the Rosen algorithm, with input randomly chosen from the invariant distribution $\mu$, will be asymptotically Gaussian.
\end{cor}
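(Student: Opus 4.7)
The plan is to apply Theorem 3 by checking that $f \in BV(I)$ and that $f$ fails Condition (H). The BV check is the easy part: the branch partition of $I_q = [-\sigma_q, \sigma_q]$ orders the digit $|a|$ monotonically as $x$ moves from each endpoint toward $0$, so a cost that is monotonic and bounded in $|a|$ becomes piecewise monotonic and bounded on $I_q$, giving $\var{I} f \leq 4\sup|f| < \infty$.

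The main work is showing Condition (H) fails. I would proceed by contradiction: suppose $f - c = u - u \circ T$ in $L^2(\mu)$ with $c = \int_I f \, d\mu$. Each full (interior) branch $I_h$ admits a unique fixed point $x_h$ of $T$, obtained by applying Banach's fixed point theorem to the strict contraction $h : I \to I_h$ (Lemma 1 with $n=1$ gives $|h'| \leq \gamma^2 < 1$). Using a Livsic-type regularity result for BV coboundaries over piecewise expanding maps with spectral gap, $u$ admits a representative of bounded variation for which the cohomological equation holds pointwise along periodic orbits of $T$. Evaluating at $x_h$ yields $f(x_h) = c$; since $f$ is constant on $I_h$, this forces $f \equiv c$ on every full branch. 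The only remaining branches are the two extremal ones (corresponding to $|a|=1$); if $f$ takes a common value $c_L$ there, then $c = c(1-\mu_L) + c_L \mu_L$ with $\mu_L = \mu(\text{extremal}) > 0$, forcing $c_L = c$ as well. So $f \equiv c$ almost everywhere, contradicting the nonconstancy of $f$.

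The step I expect to be most delicate is passing from an $L^2$ coboundary to a pointwise-meaningful BV coboundary — one needs to know that $u$ can be chosen of bounded variation when $f$ is BV and $T$ has the spectral gap established in Section 5. If invoking Livsic-type rigidity turns out to be awkward, a self-contained alternative is a purely $L^2$ argument: (H) would force $\|S_N f - Nc\|_{L^2(\mu)} \leq 2\|u\|_{L^2(\mu)}$ uniformly in $N$, which one can contradict for a nonconstant piecewise-constant $f$ via the Green--Kubo formula applied to the spectral decomposition of $H$, yielding asymptotic variance $\sigma^2 > 0$ directly. The edge case of $f$ differing from $c$ only on the extremal branches still requires the integral-matching observation above, so that step is genuinely needed regardless of which route one takes for the bulk of the argument.
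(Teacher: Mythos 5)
Your proposal is correct and rests on the same core mechanism as the paper's: fixed points of the individual branches obstruct the coboundary equation in Condition (H), forcing a branchwise-constant $f$ to equal its mean $c$ on every branch carrying a fixed point, which contradicts nonconstancy. Where you differ is in how the two ends of the argument are discharged. The paper simply cites Broise for the periodic-orbit criterion (restated in the remark following the corollary: a periodic orbit avoiding the countable exceptional set with nonzero Birkhoff sum kills Condition (H)), and notes that every branch of the Rosen map except the leftmost admits a fixed point; since the leftmost and rightmost branches both carry the digit $\abs{a}=1$ and the rightmost does have a fixed point, a cost depending only on $\abs{a}$ is then forced to be globally constant. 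You instead (i) re-derive the criterion from a Livsic-type BV regularity statement for the transfer function $u$ --- which is precisely the content of the result the paper delegates to [2], and which you correctly identify as the delicate step; note also that the pointwise equation is only guaranteed off a countable set, so one should observe, as the paper does, that the fixed points sit in the interiors of the branches where $f$ is continuous --- and (ii) handle the two extremal $\abs{a}=1$ branches by matching $\int_I f\,d\mu=c$ rather than by locating a fixed point in the (non-full) rightmost branch. Your route (ii) is slightly more robust, since it avoids verifying that the rightmost branch's fixed point lies in its restricted domain, but it does require $\mu(\{\abs{a}=1\})>0$, i.e.\ that the invariant density does not vanish on the extremal branches; this is true for the Rosen maps but deserves a word. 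Your fallback Green--Kubo variance computation is a viable alternative but is not what the paper does. In short: same skeleton, with your version supplying details the paper outsources to [2] and a marginally different, equally valid, treatment of the $\abs{a}=1$ branches.
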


Further analyses into Condition (H) for general $f$ are also given in [2]. For example, suppose $f$ is piecewise continuous. We may add a constant to $f$ so that $\int_I f d\mu=0$. Now, if a point $x\in I$ with period $k$ can be found whose orbit does not meet the countable set consisting of the orbits of discontinuities of $f$ or the endpoints of the subintervals $I_h$, and the Birkhoff sum $S_kf(x)$ is nonzero, then $f$ does not satisfy Condition (H).

\bibliographystyle{unsrt}
\bibliography{a}

\end{document}